\title{An integral region choice problem on knot projection}
\author{Kazushi Ahara}
\address{Department of Mathematics, Meiji University, 1-1-1 Higashi-Mita, Tama-ku, Kawasaki, Kanagawa, 214-8571, Japan}
\email{kazuaha63@hotmail.co.jp}
\author{Masaaki Suzuki}
\address{Department of Mathematics, 
Akita University, 
1-1 Tegata-Gakuenmachi, Akita, 010-8502, Japan}
\email{macky@math.akita-u.ac.jp}
\subjclass[2000]{57M25; 05C50}
\keywords{knot projection, region crossing change}
\newtheorem{thm}{Theorem}[section]
\newtheorem{prop}[thm]{Proposition}
\newtheorem{lem}[thm]{Lemma}
\newtheorem{cor}[thm]{Corollary}
\theoremstyle{definition}
\newtheorem{definition}[thm]{Definition}
\newtheorem{example}[thm]{Example}
\newtheorem{remark}[thm]{Remark}
\begin{document}

\begin{abstract}
 In this paper we propose {\it a region choice problem} for a knot projection.  This problem is an integral extension of Shimizu's \lq region crossing change unknotting operation.'  We show that there exists a solution of the region choice problem for all knot projections.
\end{abstract}

\maketitle


\section{Introduction}

Let $K$ be a knot and $D$ a digram of $K$.  
The areas surrounded by arcs are called {\it regions}. 
Shimizu \cite{shimizu} defined a {\it region crossing change} 
at a region $R$ to be the crossing change at all the crossings on $\partial R$ 
and showed that this local transformation is 
an unknotting operation.  
We can interpret this problem as follows. 
Let $\bar{D}$ be a projection of a knot $K$, 
where $\bar{D}$ possesses some arcs and crossings. 
We suppose that each crossing has already been equipped with 
a point $0$ or $1$ modulo $2$ and 
that if we choose a region $R$, then the points of all the crossings 
which lie on $\partial R$ are increased $1$ modulo $2$. 
Shimizu \cite{shimizu} showed that 
if we choose some regions appropriately, 
then the points of all the crossings become $0$. 
Here we call it a region choice problem modulo $2$. 
Namely, the region choice problem modulo $2$ is solvable. 

In this paper, we extend this region choice problem to an integral range. 
That is, each crossing has been equipped with an integral point and we assign an integer to each region in order to make all points on crossings $0$.
We may consider two rules, the {\it single counting rule} and the {\it double counting rule}. 
If we assign an integer $u$ to a region $R$ with the single counting rule, 
the points of all the crossings which lie on the boundary of $R$ are increased $u$. 
On the other hand, 
if we assign an integer $u$ to a region $R$ with the double counting rule, 
the points of the crossings which the region $R$ touches twice are increased $2 u$ 
and the points of the crossings which the region $R$ touches once are increased $u$. 
See an example in Figure \ref{fig:1-1}. 
\begin{figure}[t]
\includegraphics[]{fig1-1.eps}
\caption{}\label{fig:1-1}
\end{figure}
We suppose that each crossing has already been equipped with an integer
 as an initial setting.   
In this paper, we show that if we assign an integer to each region appropriately, 
the points of all the crossings become $0$. 
See an example in Figure \ref{fig:1-2}. 
\begin{figure}
\includegraphics[]{fig1-2.eps}
\caption{}\label{fig:1-2}
\end{figure}

 This paper is organized as follows.  In Section $2$, we introduce some notations.  Here we define an integral matrix $A_i(\bar D)$, called {\it a region choice matrix}, as a coefficient matrix of the system of equations of region choice problem.  Cheng and Gao proposed {\it an incidence matrix} in \cite{cheng-gao}, this is a modulo $2$ reduction of $A_1(\bar D)$.  In Section $3$, $4$, we show the main result of this paper.  Section $5$, $6$, $7$ are  appendices.  In Section 5, we show another proof of the main result for the double counting rule.  In Section 7, we show solutions of region choice problem for knot projections from $\bar 3_1$ to $\bar 6_3$


\section{Preliminary}

In this section, we define some notations and show some basic facts. 
First an easy argument gives the following. 

\begin{lem}
Let $\bar{D}$ be a knot (or link) projection. 
We denote by $m$ and $n$ 
the number of regions and crossings in $\bar{D}$ respectively. 
Then we have $m = n + 2$. 
\end{lem}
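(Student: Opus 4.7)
The plan is to treat the projection $\bar{D}$ as a $4$-valent plane graph and apply Euler's formula on $S^2$. I would view $\bar{D}$ as a cell decomposition of $S^2$ (compactifying the plane) in which the vertices are the crossings, the edges are the arcs of $\bar{D}$ connecting crossings, and the faces are the regions. With this setup, $V = n$ and $F = m$, and the goal reduces to showing $V - E + F = 2$ together with $E = 2n$.

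First I would handle the degenerate case $n = 0$: the projection is then a simple closed curve in $S^2$, which by the Jordan curve theorem bounds exactly two regions, giving $m = 2 = n + 2$ as required. Assuming $n \geq 1$, at every crossing the projection has exactly four local arcs meeting, so the resulting plane graph is $4$-regular. The handshake lemma then yields $2E = 4V = 4n$, hence $E = 2n$.

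Next I would apply the Euler formula $V - E + F = 2$ for the induced CW-decomposition of $S^2$. Substituting $V = n$, $E = 2n$, $F = m$ gives $n - 2n + m = 2$, and therefore $m = n + 2$.

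The main conceptual point — and the only real obstacle — is ensuring that $\bar{D}$ actually defines a legitimate CW-decomposition of $S^2$ so that Euler's formula applies. In particular, each face must be an open disk. For a connected knot (or link) projection this follows from the fact that $\bar{D}$, viewed in $S^2$, is a connected $4$-valent graph whose complement consists of open $2$-cells; for a link projection with several components one argues component-by-component or verifies that each complementary region is simply connected because the projection has no isolated vertices and meets itself transversely. Once this regularity is in hand, the counting is immediate.
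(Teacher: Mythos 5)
Your argument is essentially identical to the paper's own proof: both regard $\bar{D}$ as a $4$-valent graph on $S^2$ with crossings as vertices and arcs as edges, use $4$-valence to get $E=2n$, and conclude via Euler's formula $V-E+F=2$. Your extra care about the $n=0$ case and the validity of the CW-decomposition is a reasonable refinement, but the substance of the proof is the same.
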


\begin{proof}
We can regard the projection $\bar{D}$ as a graph on a sphere $S^2$ 
by considering arcs and crossings to be edges and vertices respectively. 
Since each crossing is joined by $4$ arcs, the number of arcs is $2 n$. 
On the other hand, $m + n - 2$ is equal to the number of arcs 
by the fact that the euler characteristic of $S^2$ is 2. 
Therefore we obtain the statement. 
\end{proof}

For a knot projection, we define a {\it region choice matrix} 
which has information about the relationship between regions and crossings. 

\begin{definition}[region choice matrix]
Let $\bar{D}$ be a knot projection. 
We denote by $R=\{r_1,r_2,\cdots,r_{n+2}\}$ and $V=\{v_1,v_2,\cdots, v_n\}$ 
the set of regions and crossings respectively. 

(1) The region choice matrix of the single counting rule 
\[
A_1(\bar{D})=\left( a^{(1)}_{ij} \right)_{(i,j)} \in M_{n,n+2}(\mathbb{Z})
\]
is determined by 
\[
a^{(1)}_{ij}= 
\left\{
\begin{array}{ll}
1 & (\mbox{if the crossing $v_i$ lies on the boundary of the region $r_j$}) \\
0 & (\mbox{otherwise}).
\end{array}
\right.
\]

(2) The region choice matrix of the double counting rule 
\[
A_2(\bar{D})= \left( a^{(2)}_{ij} \right)_{(i,j)} \in M_{n,n+2}(\mathbb{Z})
\]
is determined by  
\[
a^{(2)}_{ij}= 
\left\{
\begin{array}{ll}
2 & (\mbox{if the region $r_j$ touches the crossing $v_i$ twice})\\
1 & (\mbox{if the region $r_j$ touches the crossing $v_i$ once})\\
0 & (\mbox{otherwise}).
\end{array}
\right.
\]
\end{definition}

For a later argument, let us introduce the following terminology. 

\begin{definition}[kernel solution]\label{def:2-3}
Let $\bar{D}$ be a knot projection and $A_i=A_i(\bar{D})$ the region choice matrix of $\bar{D}$. 
We write $n$ for the number of regions. 
A vector $\boldsymbol{u} \in \mathbb{Z}^{n+2}$ 
is called a kernel solution (of the single/double counting rule)
if $A_i \boldsymbol{u} = \boldsymbol{o}$. 
\end{definition}

We now show an example 
about the single and double counting rules. 

\begin{example}\label{ex:2-4}
Consider a knot projection $\bar{D}$ as shown in Figure \ref{fig:2-1}. 
This knot projection gives us the region choice matrices 
\begin{figure}
\includegraphics[]{fig2-1.eps}
\caption{Example \ref{ex:2-4}}\label{fig:2-1}
\end{figure}
\[
A_1 (\bar{D}) = 
\left(
\begin{array}{cccccc}
 1 & 1 & 1 & 0 & 0 & 0 \\
 0 & 1 & 1 & 1 & 1 & 0 \\
 0 & 1 & 1 & 0 & 1 & 1 \\
 0 & 1 & 0 & 1 & 1 & 1 
\end{array}
\right), ~
A_2 (\bar{D}) = 
\left(
\begin{array}{cccccc}
 1 & 2 & 1 & 0 & 0 & 0 \\
 0 & 1 & 1 & 1 & 1 & 0 \\
 0 & 1 & 1 & 0 & 1 & 1 \\
 0 & 1 & 0 & 1 & 1 & 1 
\end{array}
\right). 
\]
A nontrivial vector 
\[
\boldsymbol{u} = 
\left(
\begin{array}{c}
1 \\ -2 \\ 1 \\ 1 \\ 0 \\ 1
\end{array}
\right)
\in {\mathbb Z}^6
\]
is a kernel solution of the single counting rule. 
However, 
$\boldsymbol{u}$ is not a kernel solution of the double counting rule. 
\end{example}


\section{Double counting rule}

In this section, 
we consider the region choice problem 
with the double counting rule. 

\begin{thm}[region choice problem of the double counting rule is solvable]\label{thm:3-1}
For any knot projection $\bar{D}$, 
let $A_2=A_2(\bar{D})\in M_{n,n+2}(\mathbb{Z})$ be the region choice matrix 
of the double counting rule 
and $\boldsymbol{b}\in \mathbb{Z}^n$ a given integral vector. 
Then there exists a solution $\boldsymbol{u}\in \mathbb{Z}^{n+2}$ such that 
\[
A_2 \boldsymbol{u} + \boldsymbol{b} = \boldsymbol{o}.
\]
\end{thm}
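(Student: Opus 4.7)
The plan is to show that $A_2\colon \mathbb{Z}^{n+2}\to\mathbb{Z}^n$ is surjective; this is equivalent to the theorem, and also to the assertion that every elementary divisor of $A_2$ in Smith normal form equals $1$.

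Before the main step I would record the identities that come from the checkerboard coloring of the regions of $\bar D$ on $S^2$. Write $\boldsymbol{x}_W,\boldsymbol{x}_B\in\mathbb{Z}^{n+2}$ for the indicator vectors of the white and black classes. At each crossing $v$ the two white corners contribute total weight $2$ in the row of $A_2$ (as $1+1$ when they belong to distinct regions, and as $2$ when they belong to a single region touched twice at $v$), and the same holds for the black corners. Consequently
\[
A_2\boldsymbol{x}_W \;=\; A_2\boldsymbol{x}_B \;=\; 2\cdot \mathbf{1}_n,\qquad A_2\mathbf{1}_{n+2} \;=\; 4\cdot \mathbf{1}_n,
\]
so $\boldsymbol{x}_W-\boldsymbol{x}_B\in\ker A_2$, which accounts for one of the two ``extra'' columns of the rectangular matrix.

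The heart of the argument is to produce, for each crossing $v_i$, an integer vector $\boldsymbol{u}_i$ with $A_2\boldsymbol{u}_i=\boldsymbol{e}_i$; by $\mathbb{Z}$-linearity the target $-\boldsymbol{b}$ is then realised as an explicit combination. I would attempt this by induction on the crossing number $n$: smooth a chosen crossing $v$ to obtain a projection $\bar D'$ with $n-1$ crossings (possibly a two-component link projection, in which case the statement has to be enlarged accordingly), apply the inductive hypothesis to $\bar D'$, and lift the solution back to $\bar D$. The identities above enter crucially here, because $\boldsymbol{x}_W$ and $\boldsymbol{x}_B$ provide a built-in mechanism for shifting every crossing simultaneously by an even amount, matching the parity of the discrepancy introduced when restoring $v$.

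The principal obstacle is the combinatorial bookkeeping of this lift: smoothing $v$ merges neighbouring regions of $\bar D$ in a way that depends on its local configuration, and the assignment inherited from $\bar D'$ must be split consistently across the merged region while preserving every equation away from $v$. A possibly cleaner alternative is to argue surjectivity one prime at a time. For odd $p$ the vector $\tfrac{1}{2}\boldsymbol{x}_W$ makes sense modulo $p$ and immediately places $\mathbf{1}_n$ in the image, reducing the problem to a rank computation. For $p=2$ the doubly-incident entries of $A_2$ vanish, so Shimizu's mod $2$ theorem for $A_1$ does not transfer automatically, and a separate combinatorial argument for the $2$-adic rank of $A_2$ is the most delicate piece of the proof.
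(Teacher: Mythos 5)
Your reduction is the right one: the theorem is equivalent to surjectivity of $A_2$ over $\mathbb{Z}$, and it suffices to realise each standard basis vector $\boldsymbol{e}_i$ as $A_2\boldsymbol{u}_i$ for some integral $\boldsymbol{u}_i$. This is precisely the paper's Proposition \ref{prop:3-3} (the add-$1$ operation), and your checkerboard identity $A_2\boldsymbol{x}_W=A_2\boldsymbol{x}_B=2\cdot\mathbf{1}_n$ is correct and is in fact one of the ingredients the paper uses (the checkerboard recolouring of one component). Your Smith-normal-form framing also matches the paper's second, algebraic proof in Appendix I, where $A_2(\bar D)$ is shown to be $\mathbb{Z}$-equivalent to $E_{00}$.

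However, there is a genuine gap: the construction of $\boldsymbol{u}_i$ with $A_2\boldsymbol{u}_i=\boldsymbol{e}_i$ is announced but never carried out. You say you would induct on the crossing number by smoothing a crossing and lifting the solution back, and then you explicitly concede that the ``combinatorial bookkeeping of this lift'' is the principal obstacle; your alternative prime-by-prime argument likewise leaves the $p=2$ case, which you correctly identify as the delicate one, unresolved. But this lift is exactly where all the content of the theorem lives. The paper resolves it by first proving Lemma \ref{lem:3-2} --- for any arc and any prescribed integers $a,b$ on its two adjacent regions there is a kernel solution of $A_2$ taking those values --- by an induction over Reidemeister moves (RI)--(RIII) together with a move through the point at infinity, starting from the one-crossing projection; it then splices the knot at $v$ into $L_1\cup L_2$, applies the lemma to $L_1$ with values $0$ and $1$, negates the assignment on the white regions of a checkerboard colouring of $L_2$, and unsplices, checking separately that self-crossings of $L_1$, self-crossings of $L_2$, and $L_1$--$L_2$ crossings are unchanged while $v$ gains exactly $1$. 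Without an argument of comparable substance for your lifting step (including the passage through two-component link projections, which you flag but do not handle), the proposal does not constitute a proof.
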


Theorem \ref{thm:3-1} shows the existence 
of a solution of the region choice problem of the double counting rule. 
We prove the following lemma in order to show Theorem \ref{thm:3-1}. 

\begin{lem}\label{lem:3-2}
Fix an arc $\gamma$ in the knot projection $\bar{D}$ and 
let $r,r'$ be two regions which are the both sides of the arc $\gamma$. 
For any integers $a,b$, 
there exists a kernel solution $\boldsymbol{u} \in {\mathbb Z}^{n+2}$ 
of the double counting rule 
such that ${u}^{(r)}=a$ and ${u}^{(r')}=b$.
Here ${u}^{(r)},{u}^{(r')}$ are elements of $\boldsymbol{u}$ 
corresponding to the regions $r,r'$ respectively. 
\end{lem}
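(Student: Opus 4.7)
The plan is to exhibit two kernel solutions of $A_2(\bar{D})$ whose restrictions to $(r, r')$ span $\mathbb{Z}^2$ as a sublattice; every prescribed pair $(a, b)$ is then attained by an integer combination.

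The first kernel solution is the checkerboard vector $\boldsymbol{\epsilon}$ that assigns $+1$ to the regions of one color class and $-1$ to those of the other. At each crossing $v$ the four incident quadrants consist of two opposite black quadrants and two opposite white quadrants; a region touching $v$ twice occupies two same-color quadrants and so contributes $\pm 2$ under the double counting rule, cancelling the $\mp 1, \mp 1$ from the remaining two regions. Hence $A_2 \boldsymbol{\epsilon} = \boldsymbol{o}$. Since $r$ and $r'$ lie on opposite sides of $\gamma$ they receive opposite colors, so $\boldsymbol{\epsilon}$ restricts to $(1, -1)$ on $(r, r')$ up to an overall sign.

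For the second kernel solution $\boldsymbol{u}_0$, I would orient the knot and index its $2n$ arcs cyclically as $\alpha_1 = \gamma, \alpha_2, \ldots, \alpha_{2n}$. Set $f(\alpha_i) = (-1)^{i-1}$, so that $f$ alternates along the knot, and seek $\boldsymbol{u}_0$ satisfying $u_0^{(r_\alpha)} + u_0^{(r'_\alpha)} = f(\alpha)$ for every arc $\alpha$. Once such a $\boldsymbol{u}_0$ is found, it lies in $\ker A_2$ automatically: the quadrant sum at each crossing unfolds, via a short bookkeeping of arc-ends, as a sum of $f$-values over pairs of arcs consecutive along the knot, each pair cancelling by the alternation of $f$. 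Moreover $u_0^{(r)} + u_0^{(r')} = f(\gamma) = 1$, and the $2 \times 2$ matrix formed by the restrictions of $\boldsymbol{\epsilon}$ and $\boldsymbol{u}_0$ has determinant $\pm 1$, giving a $\mathbb{Z}$-basis of $\mathbb{Z}^2$.

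The main obstacle is the integer solvability of the overdetermined system for $\boldsymbol{u}_0$. To handle it, I would work with the dual graph $G$ on $S^2$ whose vertices are the regions and whose edges are the arcs. Flipping the sign of $u_0$ on one checkerboard class converts the system into the potential equation $\tilde{u}_0(v) - \tilde{u}_0(v') = \pm f(e)$, which has an integer solution if and only if the signed $f$-sum around every cycle of $G$ vanishes. Because $G$ is embedded in $S^2$, its cycle space is generated by face cycles, and each face corresponds to a crossing $v$ whose $4$-walk signed $f$-sum regroups as $(f(\alpha_i) + f(\alpha_{i+1})) - (f(\alpha_j) + f(\alpha_{j+1}))$, where $\{\alpha_i, \alpha_{i+1}\}$ and $\{\alpha_j, \alpha_{j+1}\}$ are the two strand pairs at $v$; each parenthesis vanishes by the alternation of $f$ along the knot. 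Consistency in hand, integration of $\tilde u_0$ along a spanning tree of $G$ produces the required integer $\boldsymbol{u}_0$, completing the lemma.
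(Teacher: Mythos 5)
Your argument is correct, and it is a genuinely different proof from the one in the paper. The paper proves Lemma 3.2 by a geometric induction: it regards $\bar D$ as a projection of the unknot, reduces it to the one-crossing projection $\bar D_0$ by Reidemeister moves for projections, has to introduce an extra move (RIV) through the point at infinity so that the marked arc $\gamma$ is never disturbed, and checks that the desired kernel solution can be propagated backwards through each of (RI)--(RIV). You instead construct two explicit global kernel vectors: the checkerboard vector $\boldsymbol{\epsilon}$, and a vector $\boldsymbol{u}_0$ determined by prescribing the sum of its values across each arc to be an alternating $\pm 1$ along the knot (well defined because there are $2n$ arcs). Your two key verifications are sound: the row of $A_2$ at a crossing is exactly the sum of the occupying region's value over the four quadrants, so $A_2\boldsymbol{\epsilon}=\boldsymbol{o}$, and the quadrant sum for $\boldsymbol{u}_0$ regroups into $f$-sums over the two arcs of a single strand, which are consecutive along the knot and hence cancel; the same cancellation gives the cocycle condition on the face cycles of the dual graph, which generate its cycle space on $S^2$, so the overdetermined system for $\boldsymbol{u}_0$ is integrally solvable (note the dual graph has no loops since a connected $4$-regular graph is bridgeless, so each arc equation really involves two distinct regions of opposite colors). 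The unimodularity $\det\bigl(\begin{smallmatrix}1&-1\\ u_0^{(r)}&u_0^{(r')}\end{smallmatrix}\bigr)=u_0^{(r)}+u_0^{(r')}=1$ then finishes the lemma. What your route buys is a Reidemeister-free, fully self-contained construction that exhibits an explicit rank-two piece of $\ker A_2$ and avoids the somewhat delicate move-through-infinity device; what the paper's route buys is a template that it reuses elsewhere (the same Reidemeister bookkeeping reappears in the algebraic proof of Appendix I) and an argument that extends verbatim to links, which is then needed in Proposition 3.3. It is also worth noting that your quadrant-counting identities are exactly where the double counting rule is essential, consistent with the paper's need for a separate induction on reducible crossings in the single counting case.
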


\begin{proof}
We take a point $p$ on the arc $\gamma$ as shown in Figure \ref{fig:3-0}. 
\begin{figure}
\includegraphics[]{fig3-0.eps}
\caption{}\label{fig:3-0}
\end{figure}

For any knot diagram $D$, after some crossing changes, we can make $D$ into a diagram $D'$ of a trivial knot.  
Remark that the projections $\bar D$ and $\bar D'$ coincide.  Then for any knot projection $\bar D$, 
we may regard $\bar{D}$ as a knot projection of a trivial knot.

By using Reidemeister moves (RI), (RII), (RIII) for knot projections as in Figures \ref{fig:R1}, \ref{fig:R2}, \ref{fig:R3}, 
we can transpose $\bar D$ to $\bar D_0$  in Figure \ref{fig:5-1}.  
Here we consider knot projections with at least one crossing, then the projection in Figure \ref{fig:5-1} is a minimal configuration.
\begin{figure}
\includegraphics[]{fig5-1.eps}
\caption{$\bar D_0$}\label{fig:5-1}
\end{figure}

But this observation is not enough for our lemma.  
In order to avoid passing segments through $p$ while transposing $\bar{D}$ to $\bar{D}_0$, we may make segments pass through the infinity point.  
That is, we add another move (RIV), as in Figure \ref{fig:R4}.
Therefore it is sufficient to show that 
if a knot projection satisfies the statement,  
then another knot projection which is obtained from 
(RI), (RII), (RIII) and (RIV) fixing the neighborhood of $p$ 
also satisfies the statement. 

(RI) 
If one of the left projections in Figure \ref{fig:R1} satisfies the statement, 
then the right projection also satisfies the statement
by assigning each integer as shown in the right projection. 
\begin{figure}
\includegraphics[]{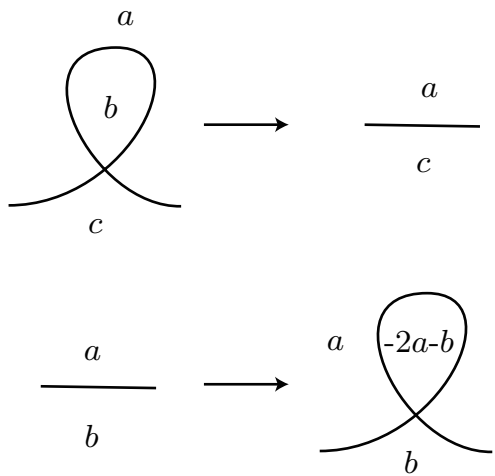}
\caption{Reidemeister move I}\label{fig:R1}
\end{figure}

(RII) 
If one of the left projections in Figure \ref{fig:R2} satisfies the statement, 
then the right projection also satisfies the statement
by assigning each integer as shown in the right projection. 
We remark that $a + b + d + e = 0,  a + c + d + e = 0$ 
in the upper left projection, 
since this projection satisfies the statement. 
Therefore we have $b = c$. 
\begin{figure}
\includegraphics[]{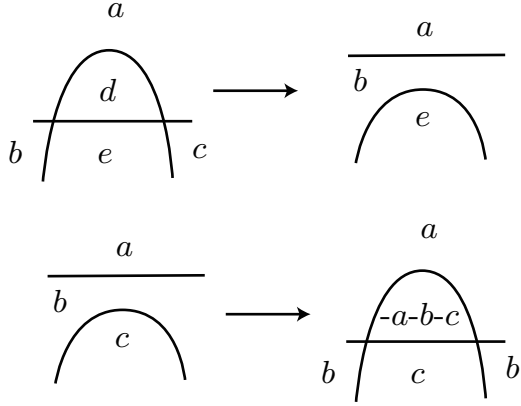}
\caption{Reidemeister move II}\label{fig:R2}
\end{figure}

(RIII) 
Suppose that the left projection in Figure \ref{fig:R3} satisfies the statement. 
\begin{figure}
\includegraphics[]{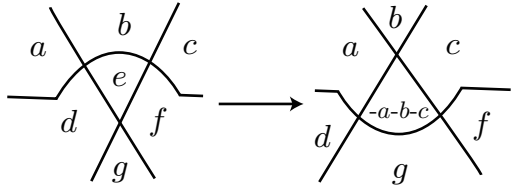}
\caption{Reidemeister move III}\label{fig:R3}
\end{figure}
Then we have the equalities 
\[
a + b + d + e = 0, ~ 
b + c + e + f = 0, ~
d + e + f + g = 0. 
\]
We assign $- a - b - c$ to the center region. 
At the three crossings, the following equalities hold:  
\begin{align*}
& a + b + c + ( - a - b - c) = 0, \\
& a + (- a - b - c) + d + g = d + g - b - c = d + g + e + f = 0, \\
& c + (- a - b - c) + f + g = f + g - a - b = f + g + d + e = 0. 
\end{align*}

(RIV) 
If the left projections in Figure \ref{fig:R4} satisfies the statement, 
then the right projection also satisfies the statement
by assigning each integer as shown in the right projection. 
\begin{figure}
\includegraphics[]{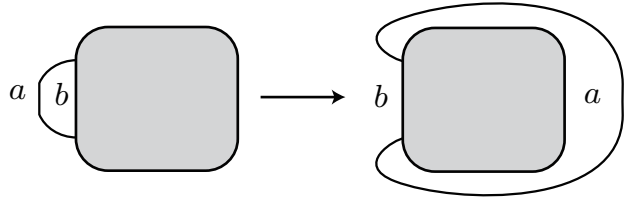}
\caption{passing through the infinity point}\label{fig:R4}
\end{figure}
Hence the right projection also satisfies the statement 
and this completes the proof. 
\end{proof}

We remark that Lemma \ref{lem:3-2} also holds for links by similar argument. 
By making use of Lemma \ref{lem:3-2}, 
we obtain the following proposition. 

\begin{prop}[add-$1$ operation]\label{prop:3-3}
Let $K$ be a knot and $\bar{D}$ a knot projection. 
We take a crossing $v$ in $\bar{D}$. 
There exists $\boldsymbol{u} \in \mathbb{Z}^{n+2}$ 
such that any elements of $A_2 \boldsymbol{u}$ are zero 
but the element of $A_2 \boldsymbol{u}$ corresponding to $v$ is $1$.
\end{prop}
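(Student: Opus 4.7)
The plan is to adapt the inductive strategy used in the proof of Lemma~\ref{lem:3-2}, now with the target vector $\boldsymbol e_v$ (the indicator of the fixed crossing $v$) in place of the zero vector. The property to be propagated under Reidemeister moves is: \emph{there exists $\boldsymbol u \in \mathbb Z^{n+2}$ such that $A_2 \boldsymbol u$ is zero at every crossing except $v$, where it equals $1$.}

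First I would verify the claim for the minimal projection $\bar D_0$ of Figure~\ref{fig:5-1}. Since $\bar D_0$ has a single crossing (necessarily $v$) and three regions, with $A_2(\bar D_0)=(1,1,2)$ as a $1\times 3$ matrix, the choice $\boldsymbol u = (1,0,0)^{\!\top}$ immediately yields $A_2 \boldsymbol u = 1 = \boldsymbol e_v$. Next I would establish invariance of the property under the four moves (RI)--(RIV) performed outside a chosen neighborhood of $v$, following the move-by-move case analysis of Lemma~\ref{lem:3-2}. For each move, assuming the projection on one side admits a solution $\boldsymbol u$, I would construct a solution on the other side by assigning suitable integer values to the regions created or reconfigured by the move. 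Because the move fixes a neighborhood of $v$, the four corners at $v$ and the regions meeting them are untouched, so the entry of $A_2 \boldsymbol u$ at $v$ remains $1$; meanwhile, the local equations at the crossings \emph{affected} by the move coincide with those appearing in Lemma~\ref{lem:3-2} (for example, the equality $b=c$ in the (RII) case and the three cancellation identities in the (RIII) case), so they admit integer solutions via the same explicit assignments, with Lemma~\ref{lem:3-2} itself supplying any needed kernel adjustments to achieve integrality.

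Finally, since any knot projection reduces to $\bar D_0$ by a finite sequence of (RI)--(RIV) moves fixing a chosen neighborhood of $v$ (by the same argument used in Lemma~\ref{lem:3-2}: view $\bar D$ as the projection of a diagram of the unknot and simplify via Reidemeister moves for knot diagrams, invoking (RIV) to avoid passing strands near $v$), combining the base case with the invariance under moves yields the proposition for every pair $(\bar D, v)$. The main obstacle will be the verification of invariance under (RIII), where three local cancellation equations must hold simultaneously at the three crossings involved while the contribution at $v$ is kept equal to $1$; since $v$ lies outside the support of the move its defining equation is untouched, and the three (RIII) identities reduce to precisely those already handled in Lemma~\ref{lem:3-2}, leaving only routine but careful bookkeeping.
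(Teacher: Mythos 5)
Your argument is correct in outline, but it is a genuinely different route from the paper's. The paper does not redo the Reidemeister induction for the inhomogeneous target: it orients $K$, splices it at $v$ into a two--component link $L_1\cup L_2$, applies Lemma~\ref{lem:3-2} (in its link version) to $L_1$ with the two regions at the spliced site prescribed to be $0$ and $1$, and then corrects the contribution at the $L_2$--crossings and the mixed crossings by multiplying the assignment by $-1$ on the white regions of a checkerboard coloring of $L_2$; unsplicing then gives $A_2\boldsymbol{u}=\boldsymbol{e}_v$. That route reuses Lemma~\ref{lem:3-2} as a black box and avoids having to reduce $\bar D$ to $\bar D_0$ while keeping a whole crossing (rather than a point on an arc) fixed, at the cost of needing the link version of the lemma and the checkerboard bookkeeping. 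Your route is self-contained and arguably more uniform, but it carries two burdens the paper's proof does not: (i) you must justify that the reduction to $\bar D_0$ can be carried out fixing a neighborhood of the \emph{crossing} $v$, so that $v$ survives as the unique crossing of $\bar D_0$ (this is automatic for a knot, since a one-crossing projection of a single closed curve is necessarily the kink $\bar D_0$, but it is a strictly stronger normalization than the one used in Lemma~\ref{lem:3-2}); and (ii) your assertion that ``the regions meeting $v$ are untouched'' is not literally correct --- a faraway (RII) move can split or merge a region that is incident to $v$ --- so the invariance of the entry of $A_2\boldsymbol{u}$ at $v$ must instead be deduced from the fact that the explicit assignments give merged regions equal values and give both pieces of a split region the value of the original, whence the weighted sum $\sum_j a^{(2)}_{vj}u_j$ is unchanged. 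That observation is already implicit in the proof of Lemma~\ref{lem:3-2}, but it should be stated explicitly here because the target value at $v$ is now $1$ rather than $0$. With those two points supplied, your induction goes through and yields an alternative proof of Proposition~\ref{prop:3-3}.
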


\begin{proof}
First we give $K$ an orientation and splice it at the crossing $v$. 
Then we obtain a two-component link, 
whose components are denoted by $L_1,L_2$. 
We assign $0$ to the region of $v$ and $1$ to the adjacent region of $L_1$ 
as shown in Figure \ref{fig:3-4}. 
\begin{figure}
\includegraphics[]{fig3-4.eps}
\caption{}\label{fig:3-4}
\end{figure}
By Lemma \ref{lem:3-2}, we have a kernel solution for $L_1$, 
that is, there exists a certain assignment some integers 
to the other regions of $L_1$ by ignoring $L_2$ such that 
the assignment does not change the points 
of all the crossings. 
Next, we apply a checkerboard coloring for $L_2$ by ignoring $L_1$. 
Then the new assignment in the white region is 
$-1$ times the integer which is determined 
by the above assignment of $L_1$. 
The assignment in the black region is not changed 
from that of $L_1$. 
Finally we unsplice $L_1$ and $L_2$ at the crossing $v$ and 
get a assignment of $\bar{D}$. 

We check how the point of each crossing is changed by this assignment. 
The point of $v$ is increased $1$. 
The points of the self-crossings of $L_1$ are not changed 
by Lemma \ref{lem:3-2}, as shown in Figure \ref{fig:L1L1}.
\begin{figure}
\includegraphics[]{fig3-7.eps}
\caption{}\label{fig:L1L1}
\end{figure}
All the four regions which touch a self-crossing of $L_2$ 
are assigned the same integer. 
Then the points of such crossings are not changed  
by the checkerboard coloring, as shown in Figure \ref{fig:L2L2}. 
\begin{figure}
\includegraphics[]{fig3-6.eps}
\caption{}\label{fig:L2L2}
\end{figure}
In the four regions 
which touch the crossings of $L_1$ and $L_2$, 
two regions are assigned the same integer
and the other two regions are so. 
One region is colored black and 
the other region is colored white 
in these two pairs. 
Then the points of such crossings are not changed, as shown in Figure \ref{fig:L1L2}. 
\begin{figure}
\includegraphics[]{fig3-5.eps}
\caption{}\label{fig:L1L2}
\end{figure}
Therefore this assignment satisfies the statement 
and completes the proof. 
\end{proof}

In Appendix II, we show an example of the procedure 
in the proof of Proposition \ref{prop:3-3} .

Proposition \ref{prop:3-3} gives us 
how to increase $1$ at an arbitrary crossing 
without changing the points of any other crossings. 
Hence this proves Theorem \ref{thm:3-1}.


\section{single counting rule case}

In this section we discuss the region choice problem with the single counting rule.  In the original problem (Shimizu's region crossing change,) counting rule is the single one and if we consider the region choice problem (of this rule) in the context of modulo 2, the solution $\boldsymbol{u}$ of 
$$A_1\boldsymbol{u}+ \boldsymbol{b}\equiv\boldsymbol{o}\quad (\text{mod.}\ 2)$$ i$\boldsymbol{b}$ is an arbitrary vector) gives a solution of original unknotting problem of the region crossing change, \cite{shimizu}.

Our second result is stated as follows:

\begin{thm}[region choice problem of the single counting rule is solvable]\label{thm:4-1}
For any knot projection ${\bar D}$, let $A_1=A_1({\bar D})\in M_{n,n+2}(\mathbb{Z})$ be the region choice matrix of the single counting rule 
and $\boldsymbol{b}\in \mathbb{Z}^n$ a given integral vector.  Then there exists a solution $\boldsymbol{u}\in \mathbb{Z}^{n+2}$ such that 
$$A_1\boldsymbol{u}+ \boldsymbol{b}=\boldsymbol{o}.$$
\end{thm}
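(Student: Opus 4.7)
The plan is to parallel Section 3: first establish single-counting analogues of Lemma \ref{lem:3-2} and Proposition \ref{prop:3-3}, and then conclude Theorem \ref{thm:4-1} exactly as Theorem \ref{thm:3-1} was deduced from the add-$1$ operation.

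I would first prove the single-counting analogue of Lemma \ref{lem:3-2}: for any arc $\gamma$ of $\bar D$ and any integers $a,b$, there is a kernel solution $\boldsymbol{u}$ of $A_1$ assigning $a,b$ to the two regions adjacent to $\gamma$. The proof is the same Reidemeister reduction to the minimal projection $\bar{D}_0$ while fixing a point $p$ on $\gamma$. The (RII), (RIII), and (RIV) cases carry over unchanged, since in these moves every region involved touches every crossing at most once and so the single- and double-counting equations coincide. For (RI) the equation at the kink crossing becomes $u_I + u_U + u_L = 0$ rather than $u_I + 2 u_U + u_L = 0$, but this is still solvable for $u_I$ given $u_U$ and $u_L$, so the inductive step still goes through.

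For the single-counting analogue of Proposition \ref{prop:3-3}, I would again orient $K$, splice at $v$ into a two-component link $L_1 \cup L_2$, place the initial $0$ and $1$ near $v$, and extend to a kernel solution of $L_1$ via the analogue of Lemma \ref{lem:3-2} just established. The only step that does not transfer automatically from the double-counting proof is the $L_2$-assignment: the checkerboard $\pm 1$ scheme works for $A_2$ because at a self-crossing of $L_2$ the four corners share a common $L_1$-value $\alpha$ and contribute $2\alpha - \alpha - \alpha = 0$ even when a region of $\bar D$ touches the crossing twice at two diagonal corners, whereas in the single-counting rule the same configuration contributes $\alpha - \alpha - \alpha = -\alpha$, which need not vanish. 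I would repair this step by replacing the checkerboard on $L_2$ by a single-counting kernel solution of $A_1(\bar L_2)$, constructed again via the analogue of Lemma \ref{lem:3-2}; this automatically gives contribution $0$ at every self-crossing of $L_2$ by definition.

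The main obstacle is then the verification at cross-crossings of $L_1$ and $L_2$. In the double-counting proof, the multiplicative structure ``$L_1$-value times $\pm 1$'' forces the four corner contributions to factor as $(\alpha_1 + \alpha_2)(\beta_1 + \beta_2)$ and vanish via the checkerboard identity $\beta_1 + \beta_2 = 0$; once $\pm 1$ is replaced by a general $L_2$-kernel value this factorization is broken, and one must choose the boundary values of both kernel solutions carefully so that the four corner contributions at each cross-crossing still sum to zero. Once the single-counting add-$1$ operation is established in this way, Theorem \ref{thm:4-1} follows by applying it at each crossing $v_i$ with multiplicity $-b_i$.
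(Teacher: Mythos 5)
Your first step (the single-counting analogue of Lemma \ref{lem:3-2}) is plausible, and it is in fact Lemma \ref{lem:4-2} of the paper, although the paper proves it by a different route: induction on the number of reducible crossings, using that $A_1(\bar D)=A_2(\bar D)$ when $\bar D$ is irreducible so that Lemma \ref{lem:3-2} applies verbatim, and splicing at a reducible crossing, which disconnects the projection into two pieces handled independently. The genuine gap is in your second step. You need an add-$1$ operation for $A_1$ at an \emph{arbitrary} crossing, and you correctly identify that the splice-and-checkerboard construction of Proposition \ref{prop:3-3} breaks down for the single counting rule, but you do not repair it: you never specify how a kernel solution of $A_1(\bar L_1)$ and one of $A_1(\bar L_2)$ are to be combined into a single assignment on the regions of $\bar D$, and the sentence ``one must choose the boundary values of both kernel solutions carefully so that the four corner contributions at each cross-crossing still sum to zero'' is precisely the statement that needs proof. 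The cancellation at an $L_1$--$L_2$ crossing in the double-counting argument comes from the factorization $(\alpha_1+\alpha_2)(\beta_1+\beta_2)$ together with the checkerboard identity $\beta_1+\beta_2=0$; once the $\pm1$ coloring is replaced by a general kernel solution of $A_1(\bar L_2)$, no local identity forces the four corner contributions to cancel, and there is one such constraint for every $L_1$--$L_2$ crossing. As written, the proposal therefore does not establish the theorem.

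The paper sidesteps this difficulty entirely. The matrices $A_1$ and $A_2$ differ only in the entries where a region touches a crossing twice, and such crossings are exactly the reducible ones; so only an add-$1$ operation at \emph{reducible} crossings is needed (Corollary \ref{cor:4-3}), and that case is easy because splicing there disconnects the diagram, leaving no $L_1$--$L_2$ crossings to control. The paper then takes the solution $\boldsymbol{u}=\sum_j a_j\boldsymbol{r}_j$ of $A_2\boldsymbol{u}=-\boldsymbol{b}$ provided by Theorem \ref{thm:3-1} and corrects it by $\sum_j a_j\sum_{i=1}^{s_j}\boldsymbol{u}_{v_i^{(j)}}$, using the identity $A_2\boldsymbol{r}_j-A_1\boldsymbol{r}_j=\sum_{i=1}^{s_j}\boldsymbol{v}_i^{(j)}$. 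To rescue your plan you would have to either prove the add-$1$ operation for $A_1$ at a non-reducible crossing by some genuinely new argument, or restrict the add-$1$ operation to reducible crossings and bootstrap from the double-counting theorem as the paper does.
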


To prove this theorem, first we consider the single counting rule's version of Lemma \ref{lem:3-2}.  

\begin{lem}\label{lem:4-2}
  Fix an arc $\gamma$ in the knot projection ${\bar D}$ 
and let $r,r'$ be two regions which are the both sides of the arc $\gamma$.  
For any integers $a,b$,  there exists a kernel solution $\boldsymbol{u} \in {\mathbb Z}^{n+2}$ 
of the single counting rule 
such that ${u}^{(r)}=a$ and ${u}^{(r')}=b$.
Here ${u}^{(r)},{u}^{(r')}$ are elements of $\boldsymbol{u}$ 
corresponding to the regions $r,r'$ respectively. 
\end{lem}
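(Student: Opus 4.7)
The plan is to mirror the strategy used to prove Lemma \ref{lem:3-2}. Fix a point $p$ on the arc $\gamma$ and use the Reidemeister moves (RI)--(RIII) together with the infinity-point move (RIV), each performed outside a neighborhood of $p$, to reduce $\bar{D}$ to the minimal one-crossing projection $\bar{D}_0$ of Figure \ref{fig:5-1}. For $\bar{D}_0$ the single-counting region choice matrix is $A_1=(1,1,1)$, so its kernel is two-dimensional and every prescription $u^{(r)}=a$, $u^{(r')}=b$ on the two regions adjacent to $\gamma$ extends to an integer kernel solution by setting the third coordinate equal to $-a-b$. It therefore suffices to show that each of (RI)--(RIV), applied away from $p$, preserves the property \emph{admits a kernel solution of the single counting rule with prescribed integer values on the two regions adjacent to $\gamma$}.

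The verification of this preservation step follows the template of Lemma \ref{lem:3-2}, with the integer labels in Figures \ref{fig:R1}--\ref{fig:R4} readjusted to reflect the single counting rule. The one substantive change is that a region appearing at two opposite slots of a crossing now contributes only a single copy of its value to the crossing sum, rather than being doubled. For (RI), for instance, if the outer region carries value $y$, the small disk bounded by the new loop is assigned the value $-y$, so that the single-counting sum $y+(-y)=0$ at the new crossing while every previously satisfied crossing is left untouched. Analogous integer corrections cover the regions introduced by (RII), (RIII), and (RIV); the consistency relations on the preexisting labels (such as the identity $b=c$ used in the (RII) step of Lemma \ref{lem:3-2}) are inherited verbatim, now as consequences of $A_1\boldsymbol{u}=\boldsymbol{o}$ in place of $A_2\boldsymbol{u}=\boldsymbol{o}$.

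The main obstacle is the (RIII) step, where a single integer must be chosen for the central region so that all three modified crossings simultaneously vanish under the single counting rule. This produces a small linear system whose compatibility has to be deduced from the three equations furnished by $A_1\boldsymbol{u}=\boldsymbol{o}$ at the preexisting crossings. Because the single-counting equations differ from the double-counting ones of Lemma \ref{lem:3-2} only in the coefficients attached to corner regions that touch a crossing twice, the same algebraic cancellation pattern should go through, but with the coefficient adjustments tracked carefully. This re-derivation is more a matter of bookkeeping than of new ideas, and once it is carried out, the proof of Theorem \ref{thm:4-1} proceeds from Lemma \ref{lem:4-2} by exactly the same route (an add-$1$ operation analogous to Proposition \ref{prop:3-3}) as in the double counting case.
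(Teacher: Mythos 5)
Your proposal takes a genuinely different route from the paper, and the route has a gap precisely at the point you dismiss as ``bookkeeping.'' The paper does \emph{not} redo the Reidemeister argument for the single counting rule; it argues by induction on the number of reducible crossings: if $\bar D$ is irreducible then no region touches any crossing twice, so $A_1(\bar D)=A_2(\bar D)$ and Lemma \ref{lem:3-2} applies verbatim; otherwise one splices at a reducible crossing $v$, obtains two components $L_1\cup L_2$ each with fewer reducible crossings, applies the inductive hypothesis to each (with matched boundary values $c,d$ and $-c-d$ around $v$), and merges the two kernel solutions. The reason the paper avoids your route is structural, and the authors flag it themselves in the Remark following Proposition \ref{prop:5-5}: the (RII) argument does not carry over to $A_1$.

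The obstruction is that the single counting rule is not local. Under the double counting rule the contribution of a region to a crossing is the number of corners of that crossing it contains, so when (RII) splits a region $r_2$ into $r_2'$ and $r_2''$ the contributions are additive ($\boldsymbol{b}=\boldsymbol{b}'+\boldsymbol{b}''$ in Proposition \ref{prop:5-4}) and it suffices to fix up the crossings at the move site. Under the single counting rule, a region that touches a \emph{distant} crossing $w$ twice contributes its value only once; after the split, $r_2'$ and $r_2''$ may each touch $w$ once and then jointly contribute $u^{(r_2')}+u^{(r_2'')}$. Preserving the kernel condition at $w$ therefore forces constraints such as $u^{(r_2')}+u^{(r_2'')}=u^{(r_2)}$ on top of the equations at the two new bigon crossings, and these cannot in general all be met by a purely local relabelling; the move changes the matrix $A_1$ in rows far from the move site. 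For the same reason your claim that relations like $b=c$ are ``inherited verbatim'' fails: at a crossing that some region touches twice, the single-counting equation is not a sum of four corner values, so the algebra of the (RII) and (RIII) steps of Lemma \ref{lem:3-2} does not transfer. (Your (RI) computation is also off: the new crossing is adjacent to three distinct regions, with row $(1,1,0,\dots,0,1)$ in $A_1$, not two.) To repair the proof you would essentially have to control when regions touch crossings twice, which is exactly what the paper's reduction to the irreducible case accomplishes.
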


\begin{proof}
We use induction on the number of reducible crossings in ${\bar D}$.

First we consider the case ${\bar D}$ is irreducible.  
In this case, the two region choice matrices $A_1({\bar D})$ and $A_2({\bar D})$ coincide.  
It follows that we have a solution by Lemma \ref{lem:3-2}. 

Next, we assume that if the number of reducible crossing is less than $k$ then there exists a kernel solution.  
Suppose that knot projection ${\bar D}$ has $k$ reducible crossings 
(Figure \ref{fig:4-1}). 
We choose a reducible crossing $v$.

\begin{figure}
\includegraphics[]{fig4-1.eps}
\caption{}\label{fig:4-1}
\end{figure}

We splice ${\bar D}$ at the crossing $v$ and suppose that we obtain two component link ${L}_1 \cup {L}_2$ (Figure \ref{fig:4-2}).  
(Because $v$ is a reducible crossing, we have two connected components after splicing.)   
Here let ${L}_1$ be the component with the arc $\gamma$.  

\begin{figure}
\includegraphics[]{fig4-2.eps}
\caption{}\label{fig:4-2}
\end{figure}

The number of reducible crossings of ${L}_1$ is less than $k$. 
By the assumption of induction, there exists a kernel solution $\boldsymbol{u}_1$ of $A_1({L}_1)$ 
such that ${u}_1^{(r)}=a$ and ${u}_1^{(r')}=b$ (Figure \ref{fig:4-3}). 
Here let $c,d$ be integers which appear in $\boldsymbol{u}_1$ correspondent with the regions in Figure \ref{fig:4-3}.

\begin{figure}
\includegraphics[]{fig4-3.eps}
\caption{}\label{fig:4-3}
\end{figure}

The number of reducible crossings in ${L}_2$ is less than $k$, we apply the assumption of induction on ${L}_2$ with the condition as in Figure \ref{fig:4-4} and we have a kernel solution $\boldsymbol{u}_2$ of $A_1({L}_2)$.

\begin{figure}
\includegraphics[]{fig4-4.eps}
\caption{}\label{fig:4-4}
\end{figure}

We merge $\boldsymbol{u}_1$ and $\boldsymbol{u}_2$ in ${\bar D}$ 
and we have a kernel solution $\boldsymbol{u}$ such that ${u}^{(r)}=a$ and ${u}^{(r')}=b$ (Figure \ref{fig:4-5}).  

\begin{figure}
\includegraphics[]{fig4-5.eps}
\caption{}\label{fig:4-5}
\end{figure}

Indeed, for any crossings $v_i$ in ${L}_1$, the element $v_i$ of $A_1({\bar D}) \boldsymbol{u}$ vanishes because $\boldsymbol{u}_1$ is a kernel solution of $A_1({L}_1)$.  
In the same way for any crossings $v_i$ in ${L}_2$, the element $v_i$ of $A_1({\bar D}) \boldsymbol{u}$ vanishes.  
At the crossing $v$, the element $v$ of $A_1({\bar D}) \boldsymbol{u}$ is $c+d+(-c-d)=0$.  This completes the proof.
\end{proof}

Using the same idea of the proof of this lemma and of Proposition \ref{prop:3-3}, we have a corollary.

\begin{cor}[add-1 operation at a reducible crossing]\label{cor:4-3}
Let ${\bar D}$ be a knot projection and $v$ a reducible crossing of ${\bar D}$.  
There exists $\boldsymbol{u}_v\in\mathbb{Z}^{n+2}$ such that any element of $A_1\boldsymbol{u}_v$ are zero but the element $v$ is $1$.
\end{cor}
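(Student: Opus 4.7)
The plan is to combine the splicing strategy from Proposition \ref{prop:3-3} with the merging technique used in the inductive step of Lemma \ref{lem:4-2}. Since $v$ is a reducible crossing of the connected projection $\bar D$, splicing at $v$ in the separating direction yields two disjoint knot projections $L_1$ and $L_2$; consequently every crossing of $\bar D$ other than $v$ is a self-crossing of $L_1$ or of $L_2$, and because $v$ is reducible the three distinct regions meeting $v$ consist of a common outer region $R_0$ (touching $v$ at two opposite corners), a region $R_1$ inside $L_1$ adjacent to the arc of $L_1$ at $v$, and a region $R_2$ inside $L_2$ adjacent to the arc of $L_2$ at $v$.

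The key step is to apply Lemma \ref{lem:4-2} to $L_1$, using the arc through the former position of $v$, to obtain a kernel solution $\boldsymbol{u}_1$ of $A_1(L_1)$ with $u_1^{(R_0)} = 0$ and $u_1^{(R_1)} = 1$. I then define $\boldsymbol{u}_v$ on $\bar D$ by assigning $u_1^{(r)}$ to every region $r$ interior to $L_1$, and $0$ to $R_0$ and to every region interior to $L_2$. Equivalently, on the $L_2$ side we use the trivial kernel solution, which Lemma \ref{lem:4-2} also supplies directly with $u_2^{(R_0)} = u_2^{(R_2)} = 0$.

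Verification proceeds crossing by crossing. At each self-crossing of $L_1$, the four angular sectors meet regions whose $\bar D$-values equal $\boldsymbol{u}_1$'s assignments: interior regions of $L_1$ agree by construction, and $R_0$ (which is the $\bar D$-realization of $L_1$'s exterior in the local neighborhood of any $L_1$-crossing, since $L_2$ is physically separated from $L_1$) agrees because $u_v^{(R_0)} = 0 = u_1^{(R_0)}$; the column sum therefore equals the corresponding component of $A_1(L_1)\boldsymbol{u}_1 = \boldsymbol{o}$ and vanishes. Every self-crossing of $L_2$ has all four adjacent regions assigned $0$, so its column sum is trivially zero, and at $v$ itself the three distinct adjacent regions contribute $u_v^{(R_0)} + u_v^{(R_1)} + u_v^{(R_2)} = 0 + 1 + 0 = 1$. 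The main subtlety---and the reason the reducibility hypothesis on $v$ is essential---is the consistency of this merge across $R_0$: without reducibility the splicing would leave $L_1$ and $L_2$ sharing crossings whose four surrounding regions mix interiors of both components, and no single assignment to $R_0$ could allow $L_1$'s kernel solution to remain a kernel in the larger projection.
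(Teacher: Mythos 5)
Your proof is correct and follows essentially the same route as the paper: splice at the reducible crossing $v$, use Lemma \ref{lem:4-2} to prescribe the values of a kernel solution on the regions flanking the splice arc, and merge the two components' solutions, checking the crossings of $L_1$, of $L_2$, and $v$ separately. The only (cosmetic) difference is that the paper keeps $\boldsymbol{u}_1$ arbitrary and adjusts the prescribed values of a solution $\boldsymbol{u}_2'$ on $L_2$ so that the sum at $v$ becomes $1$, whereas you normalize $\boldsymbol{u}_1$ to take the values $0,1$ at the splice and use the zero solution on $L_2$ --- a slightly cleaner choice requiring only one application of Lemma \ref{lem:4-2}.
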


\begin{proof}
In the proof of Lemma \ref{lem:4-2}, we replace $\boldsymbol{u}_2$ by $\boldsymbol{u}'_2$ with condition in Figure \ref{fig:4-6}.  Then merged vector $\boldsymbol{u}_v$ of $\boldsymbol{u}_1$ and $\boldsymbol{u}'_2$ gives our solution. \end{proof}

\begin{figure}
\includegraphics[]{fig4-6.eps}
\caption{}\label{fig:4-6}
\end{figure}

\begin{proof}[Proof of Theorem \ref{thm:4-1}]
We will show Theorem \ref{thm:4-1}.
A difference between region choice matrices $A_1=A_1({\bar D})$ and $A_2=A_2({\bar D})$ is how to deal with a region $r_j$ which touches a crossing $v_i$ twice.  In this case, $(i,j)$ component of $A_1({\bar D})$ is one and that of $A_2({\bar D})$ is two.

Now, for each region $r_j$, let $v_1^{(j)},v_2^{(j)},\cdots,v_{s_j}^{(j)}$ be crossings which are touched by $r_j$ twice.  (Here $s_j$ might be zero.)  See Figure \ref{fig:4-7} .

\begin{figure}
\includegraphics[]{fig4-7b.eps}
\caption{}\label{fig:4-7}
\end{figure}

If we let 
$$\boldsymbol{r}_j=\begin{pmatrix}0\\ \vdots \\ 0 \\ 1 \\ 0 \\ \vdots \\ 0 \end{pmatrix} r_j,\quad \boldsymbol{v}_i^{(j)}=\begin{pmatrix}0\\ \vdots \\ 0 \\ 1 \\ 0 \\ \vdots \\ 0 \end{pmatrix} v_i^{(j)},$$
then we have 
$$ A_2 \boldsymbol{r}_j - A_1 \boldsymbol{r}_j=\sum_{i=1}^{s_j}\boldsymbol{v}_i^{(j)}$$

By Corollary \ref{cor:4-3}, there exists $\boldsymbol{u}_{v_i^{(j)}}$ such that 
$$ A_1\boldsymbol{u}_{v_i^{(j)}}=\boldsymbol{v}_i^{(j)}.$$
In Section 3, we show that for any $\boldsymbol{b}\in\mathbb{Z}^n$, there exists $\boldsymbol{u}\in\mathbb{Z}^{n+2}$ such that $A_2\boldsymbol{u}=-\boldsymbol{b}$.  Let $\boldsymbol{u}=\displaystyle\sum_{j} a_j\boldsymbol{r}_j$ and $\boldsymbol{u'}= \boldsymbol{u}+\displaystyle\sum_{j}a_j\sum_{i=1}^{s_j}\boldsymbol{u}_{v_i^{(j)}}$. 

Under these preparations, we obtain
\begin{align*}
A_1\boldsymbol{u'} &= \displaystyle\sum_{j} a_jA_1\boldsymbol{r}_j + \displaystyle\sum_{j}a_j\sum_{i=1}^{s_j}A_1\boldsymbol{u}_{v_i^{(j)}} \\
&=\displaystyle\sum_{j} a_j\left( A_1\boldsymbol{r}_j + \sum_{i=1}^{s_j}\boldsymbol{v}_i^{(j)}\right) \\
&=\displaystyle\sum_{j} a_j A_2\boldsymbol{r}_j = A_2\boldsymbol{u}=-\boldsymbol{b}
\end{align*}

This completes the proof of Theorem \ref{thm:4-1}. 
\end{proof}


\section{Appendix I}

In this section, we give another proof of Theorem \ref{thm:3-1} using only matrix elementary operations.  
This proof might be simpler than that of in Section $3$.  
The proof in Section $3$ is surely geometric and the proof in this section is algebraic, and both proofs are valuable for studying this problem.

First we review integral elementary operations. 
Two integral matrices $A,B \in M_{n,n+2}(\mathbb{Z})$ are 
called to be $\mathbb{Z}$-equivalent 
if we transpose $A$ into $B$ by the followings.
\begin{itemize}
\item Interchange two rows (resp. two columns.)
\item Multiply a row (resp. a column) by $-1$.
\item Add a row (resp. a column) to another one multiplied by an integer. 
\end{itemize} 
Remark that these elementary operation matrices are of determinant $\pm 1$ 
and that the inverse is also an integral matrix.  

\begin{lem}\label{lem:5-1}
If an integral matrix $A \in M_{n,n+2}(\mathbb{Z})$ is $\mathbb{Z}$-equivalent to 
$$ E_{00}:=\left(\begin{array}{ccc|cc} 1&&0&0&0 \\ &\ddots &&\vdots&\vdots \\ 0&&1&0&0 \end{array}\right), $$
then for any $\boldsymbol{b}\in\mathbb{Z}^n$ 
there exists a solution $\boldsymbol{u}\in \mathbb{Z}^{n+2}$ 
such that $A\boldsymbol{u}+\boldsymbol{b}=\boldsymbol{o}$. 
\end{lem}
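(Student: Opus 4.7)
The plan is to realize the $\mathbb{Z}$-equivalence at the level of matrices and then solve the trivial system for $E_{00}$, pulling the solution back.

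First I would observe that each elementary row operation corresponds to left multiplication by an element of $GL_n(\mathbb{Z})$, and each elementary column operation to right multiplication by an element of $GL_{n+2}(\mathbb{Z})$; indeed the excerpt already notes that the elementary operation matrices have determinant $\pm 1$ and admit integral inverses. Since $A$ is $\mathbb{Z}$-equivalent to $E_{00}$, I can therefore produce invertible integral matrices $P \in GL_n(\mathbb{Z})$ and $Q \in GL_{n+2}(\mathbb{Z})$ with
\[
PAQ = E_{00}.
\]

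Next, given $\boldsymbol{b} \in \mathbb{Z}^n$, I would rewrite the target equation $A\boldsymbol{u} + \boldsymbol{b} = \boldsymbol{o}$ by applying $P$ on the left and substituting $\boldsymbol{u} = Q\boldsymbol{w}$ with $\boldsymbol{w} \in \mathbb{Z}^{n+2}$; this transforms the equation into
\[
E_{00}\boldsymbol{w} = -P\boldsymbol{b}.
\]
Because $E_{00}$ is the block matrix $[\,I_n \mid 0_{n,2}\,]$, this system has the explicit solution obtained by taking the first $n$ coordinates of $\boldsymbol{w}$ to be the entries of $-P\boldsymbol{b}$ and the last two coordinates to be $0$. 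This $\boldsymbol{w}$ is integral since $P\boldsymbol{b}$ is, and then $\boldsymbol{u} := Q\boldsymbol{w}$ is integral since $Q$ is integral, finishing the lemma.

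There is essentially no obstacle here: the whole content is that $\mathbb{Z}$-equivalence preserves both integrality of solutions and solvability over $\mathbb{Z}$. The only point worth being careful about is to emphasize that $P$ and $Q$ are genuinely unimodular (and hence $P\boldsymbol{b}$ and $Q\boldsymbol{w}$ stay in $\mathbb{Z}^n$ and $\mathbb{Z}^{n+2}$ respectively), which is immediate from the form of the three elementary operations listed before the lemma. After this lemma, the remaining work in Appendix I will be to exhibit a sequence of integral elementary operations that reduces $A_2(\bar{D})$ to $E_{00}$, which is where the combinatorial content of the Reidemeister-type reductions will reappear.
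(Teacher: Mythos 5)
Your proposal is correct and follows essentially the same route as the paper: both factor the equivalence as $PAQ=E_{00}$ with unimodular $P,Q$ and take $\boldsymbol{u}=Q\boldsymbol{w}$ where the first $n$ entries of $\boldsymbol{w}$ are $-P\boldsymbol{b}$ (the paper merely allows the last two entries to be arbitrary integers rather than $0$). No gaps.
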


\begin{proof}
Since $A$ is $\mathbb{Z}$-equivalent to $E_{00}$, 
there exist integral non-singular matrices $P,Q$ such that 
$$ PAQ=E_{00} .$$
Indeed, $P,Q$ are certain products of elementary operation matrices. Set 
$$ \boldsymbol{u}=Q\begin{pmatrix}-P\boldsymbol{b} \\ \alpha \\ \beta \end{pmatrix} ,$$
where $\alpha,\beta$ are any integers.  Then we have 
\begin{align*}
A\boldsymbol{u}&=AQ\begin{pmatrix}-P\boldsymbol{b} \\ \alpha \\ \beta \end{pmatrix}
=P^{-1}PAQ\begin{pmatrix}-P\boldsymbol{b} \\ \alpha \\ \beta \end{pmatrix}\\
&=P^{-1}E_{00}\begin{pmatrix}-P\boldsymbol{b} \\ \alpha \\ \beta \end{pmatrix} 
=P^{-1}(-P\boldsymbol{b})=-\boldsymbol{b}.
\end{align*}
\end{proof}

Due to Lemma \ref{lem:5-1}, next we will consider that 
a region choice matrix is $\mathbb{Z}$-equivalent to $E_{00}$. 

\begin{thm}[Therem \ref{thm:3-1}]\label{thm:5-2}
For any knot projection ${\bar D}$, 
the region choice matrix of the double counting rule $A_2({\bar D})$ 
is $\mathbb{Z}$-equivalent to $E_{00}$.
\end{thm}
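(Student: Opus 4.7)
The plan is to prove Theorem \ref{thm:5-2} by induction on the number of crossings of $\bar{D}$, reducing via the Reidemeister moves (RI)--(RIV) of Figures \ref{fig:R1}--\ref{fig:R4} to the minimal one-crossing projection $\bar{D}_0$ of Figure \ref{fig:5-1}. The strategy has two parts: (i) verify that $A_2(\bar{D}_0)$ is $\mathbb{Z}$-equivalent to $E_{00}$, and (ii) show that the property of being $\mathbb{Z}$-equivalent to $E_{00}$ is preserved under each Reidemeister move. The existence of a reduction to $\bar{D}_0$ through moves (RI)--(RIV) has already been established in Section~3.

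For the base case, the unique crossing of $\bar{D}_0$ is touched twice by the outer region and once by each of the two bounded regions, so $A_2(\bar{D}_0) = (2,1,1)$ up to column order. The elementary operations $C_1 \mapsto C_1 - 2 C_2$ and $C_3 \mapsto C_3 - C_2$ followed by a column swap convert this to $(1,0,0) = E_{00}$.

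For the inductive step, I would compare $A_2(\bar{D})$ and $A_2(\bar{D}')$ via block structure. Under (RI), after reordering so that the newly created small loop region and new crossing come last, $A_2(\bar{D}')$ takes the form
\[
\begin{pmatrix} A_2(\bar{D}) & \boldsymbol{0} \\ \boldsymbol{r}^T & 1 \end{pmatrix},
\]
since the small loop region is adjacent only to the new crossing (with multiplicity one). Column operations using the $1$ in the bottom-right corner clear $\boldsymbol{r}^T$, producing a block form $\mathrm{diag}(A_2(\bar{D}), 1)$, which is $\mathbb{Z}$-equivalent to $E_{00}$ by the inductive hypothesis. For (RII) the two new crossings, the new lens region, and the newly split piece of the old middle region together contribute a bottom-right $2 \times 2$ block of the form $\left(\begin{smallmatrix} 0 & 1 \\ 1 & 1 \end{smallmatrix}\right)$, which has determinant $\pm 1$; once the column splitting of the old middle region is undone by a column addition, the argument proceeds as for (RI). For (RIII) and (RIV) the crossing and region counts are unchanged, and one exhibits explicit integer row/column operations implementing the local change in adjacencies around the affected crossings.

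The main obstacle is the case analysis for (RII) and (RIII). Under (RII) the old middle region splits into several new columns of $A_2$, so the correct reduction must first recombine those columns before extracting the unimodular bottom-right block; one also has to track carefully how many of the four local regions around each new crossing coincide globally. Under (RIII), where counts are preserved and only the adjacency pattern changes, one cannot simply peel off a new block, and instead must write down a unimodular change of basis that relates the rows and columns of $A_2(\bar{D})$ and $A_2(\bar{D}')$ directly, using the balance identities $a+b+d+e = b+c+e+f = d+e+f+g = 0$ exploited in the Section~3 proof of Lemma \ref{lem:3-2}.
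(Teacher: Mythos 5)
Your proposal follows essentially the same route as the paper: verify the base case $A_2(\bar{D}_0)=(2,1,1)\sim E_{00}$, then show $\mathbb{Z}$-equivalence is preserved under the Reidemeister moves, peeling off a unimodular block for (RI) and (RII) (after recombining the two columns into which the old middle region splits, exactly as in Proposition \ref{prop:5-4}) and giving explicit column operations for (RIII) as in Proposition \ref{prop:5-5}. The only cosmetic difference is that you also carry along (RIV), which is unnecessary here since no marked point must be avoided and that move does not change the region choice matrix.
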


Before proving Theorem \ref{thm:5-2}, 
we will discuss how Reidemeister moves affect region choice matrices. 

\begin{prop}[Reidemeister move I]\label{prop:5-3}
Let ${\bar D}_1,{\bar D}_1'$ are knot projections as in Figure \ref{fig:5-3}. 
Then $A_2({\bar D}_1)\oplus (1)$ and $A_2({\bar D}_1')$ are $\mathbb{Z}$-equivalent to each other. 

\begin{figure}
\includegraphics[]{fig5-3.eps}
\caption{}\label{fig:5-3}
\end{figure}

\end{prop}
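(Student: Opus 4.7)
The plan is to exploit the fact that a Reidemeister I move is highly local: it introduces exactly one new crossing $v^{*}$ and one new region $r^{*}$ (the interior of the loop), and $r^{*}$ meets only $v^{*}$ on its boundary. The goal is to read off $A_2(\bar D_1')$ as a block matrix with respect to this single new row and column, and then kill the off-block entries by two elementary column operations.

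First I would make the local geometric bookkeeping precise. Of the four corners at $v^{*}$, one lies in $r^{*}$, one lies in the region $B$ on the far side of the original arc, and the remaining two lie in the region $A$ into which the loop bulges. Hence the row of $A_2(\bar D_1')$ indexed by $v^{*}$ has entry $2$ in column $A$, entry $1$ in columns $B$ and $r^{*}$, and zeros elsewhere; and the column indexed by $r^{*}$ has its only nonzero entry (a $1$) in the $v^{*}$ row. The incidence data between any preexisting crossing and any preexisting region is unchanged. Ordering so that $v^{*}$ is the last row and $r^{*}$ is the last column then gives
\[
A_2(\bar D_1') \;=\; \begin{pmatrix} A_2(\bar D_1) & \mathbf{0} \\ \mathbf{w}^{T} & 1 \end{pmatrix},
\]
where $\mathbf{w} \in \mathbb{Z}^{n+2}$ has coordinate $2$ in the $A$-slot, $1$ in the $B$-slot, and $0$ elsewhere.

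The reduction to $A_2(\bar D_1) \oplus (1)$ is then essentially free: since the last column has its single nonzero entry equal to $1$ at the bottom, subtracting twice the last column from the $A$-column and once the last column from the $B$-column alters only the entries of $\mathbf{w}^{T}$, killing both of them while leaving the $A_2(\bar D_1)$ block untouched. The result is exactly $A_2(\bar D_1) \oplus (1)$, and since only integral column operations were used, the two matrices are $\mathbb{Z}$-equivalent.

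The only real obstacle is confirming the incidence pattern $(2,1,1)$ at $v^{*}$ together with the fact that $r^{*}$ is adjacent to no other crossing; once this local picture is settled, the reduction is a single pair of elementary column operations. I would also note that the argument is symmetric between the two possible kinks shown in Figure~\ref{fig:5-3} (i.e., which side the loop bulges into and which strand is on top), since the crossing sign does not affect $A_2$.
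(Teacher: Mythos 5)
Your proof is correct and follows essentially the same route as the paper: both identify the block form $\left(\begin{smallmatrix} A_2(\bar D_1) & \boldsymbol{0} \\ (2\ 1\ 0\cdots 0) & 1 \end{smallmatrix}\right)$ for $A_2(\bar D_1')$, with the new region's column supported only at the new crossing, and then clear the $(2,1)$ entries of the bottom row by adding $-2$ times and $-1$ times the new column to the two adjacent regions' columns. Your explicit verification of the corner count $(2,1,1)$ at the new crossing is exactly the geometric fact the paper's displayed matrix encodes.
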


\begin{proof}
If we represent $A_2({\bar D}_1)$ by 
$$ A_2({\bar D}_1)=\begin{pmatrix} \boldsymbol{a}&  \boldsymbol{b}&P\end{pmatrix}, $$
where the columns of $\boldsymbol{a},\boldsymbol{b}$ in $A_2({\bar D}_1)$ 
correspond to the regions $r_1,r_2$ respectively, 
then $A_2({\bar D}_1')$ is 
$$ A_2({\bar D}_1')=\left(\begin{array}{cc|c|c}  \boldsymbol{a}&  \boldsymbol{b} & P & \boldsymbol{o} \\ \hline 2&1&0&1 \end{array}\right). $$
The last column of $A_2({\bar D}_1')$ corresponds to $r_0$ 
and the last row corresponds to $v$.  
Adding the column $r_0$ to the column $r_1$ (resp. the column $r_2$) 
multiplied by $-2$ (resp. $-1$), 
we get  $A_2({\bar D}_1)\oplus (1)$ from $A_2({\bar D}_1')$ by integral elementary operations: 
$$ \left(\begin{array}{cc|c|c}  \boldsymbol{a}&  \boldsymbol{b} & P & \boldsymbol{o} \\ \hline 2&1&0&1 \end{array}\right)\longrightarrow \left(\begin{array}{cc|c|c}  \boldsymbol{a}&  \boldsymbol{b} & P & \boldsymbol{o} \\ \hline 0&0&0&1 \end{array}\right). $$
\end{proof}

\begin{prop}[Reidemeister move II]\label{prop:5-4}
Let ${\bar D}_2,{\bar D}_2'$ are knot projections as in Figure \ref{fig:5-4}.  
Then $A_2({\bar D}_2)\oplus (1)\oplus (1) $ and $A_2({\bar D}_2')$ 
are $\mathbb{Z}$-equivalent to each other. 

\begin{figure}
\includegraphics[]{fig5-4.eps}
\caption{}\label{fig:5-4}
\end{figure}

\end{prop}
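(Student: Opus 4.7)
The plan is to mirror the proof of Proposition~\ref{prop:5-3} for the Reidemeister I move, carrying out the analogous reduction for the two new crossings and two new regions introduced by the Reidemeister II move.

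First I would set up the block structure. Write $A_2(\bar{D}_2)=(\boldsymbol{a},\boldsymbol{b},\boldsymbol{c},P)$, where $\boldsymbol{a},\boldsymbol{b},\boldsymbol{c}$ are the columns for the three local regions $r_1,r_2,r_3$ meeting the RII disk. In $\bar{D}_2'$ the move produces two new crossings $v_1,v_2$ and two new regions: the bigon $r_0$ enclosed by $v_1,v_2$, and a sub-region $r_2'$ pinched off from $r_2$. Because the move is local, as depicted in Figure~\ref{fig:5-4}, both new regions are bounded only by arcs inside the RII disk and by the new crossings, so neither meets any old crossing and their columns in $A_2(\bar{D}_2')$ are zero above the new rows. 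Reading the incidences of $v_1,v_2$ with the five locally affected regions off the figure gives
\[
A_2(\bar{D}_2') \;=\;
\left(\begin{array}{ccccc|c}
\boldsymbol{a} & \boldsymbol{b} & \boldsymbol{0} & \boldsymbol{c} & \boldsymbol{0} & P \\
\hline
1 & 1 & 0 & 1 & 1 & 0 \\
1 & 0 & 1 & 1 & 1 & 0
\end{array}\right),
\]
with columns labelled $r_1, r_2, r_2', r_3, r_0,$ other.

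The reduction then takes three elementary steps. First, the row replacement $v_2 \leftarrow v_2 - v_1$ converts the last row into $(0,-1,1,0,0,0)$. Second, subtracting the $r_0$-column from each of the $r_1$-, $r_2$-, and $r_3$-columns kills the remaining $v_1$-entries, leaving the top entries undisturbed because the $r_0$-column vanishes above. Third, adding the $r_2'$-column to the $r_2$-column absorbs the residual $-1$ in the $v_2$-row, restoring the $r_2$-column to $(\boldsymbol{b},0,0)$. After a column permutation the matrix has the block-diagonal form $A_2(\bar{D}_2) \oplus (1) \oplus (1)$, establishing the $\mathbb{Z}$-equivalence.

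The step I expect to be the main obstacle is justifying the block shape above, specifically the claim that the column for the new region $r_2'$ is zero in every old row. This is the geometric content that the RII move is strictly local: both new regions are bounded only by arcs inside the RII disk and the two new crossings, so no old crossing lies on their boundaries. Once this structural claim is in hand, the three-step matrix computation is a routine sequence of integral elementary operations, in direct parallel with the one-step reduction used in Proposition~\ref{prop:5-3}.
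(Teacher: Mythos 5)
There is a genuine gap, and it sits exactly at the step you flagged as the main obstacle: the claim that the region pinched off from $r_2$ has a zero column in every old row is false in general. In a Reidemeister II move the two strands both bound $r_2$, and after the move $r_2$ is cut into two regions $r_2'$ and $r_2''$, one opening to each side of the new bigon. Both pieces extend outside the local disk, and the old crossings that lay on $\partial r_2$ are distributed between them; there is no reason all of them land on one side. (Note also that $m=n+2$ forces $r_2$ to genuinely split into two regions, so you cannot avoid this by hoping the two pieces reconnect elsewhere.) With your assumed block shape, the column you end up labelling $r_2$ in the final matrix carries only the old-crossing incidences of one of the two pieces, so the top-left block is not $A_2(\bar D_2)$ and the reduction does not land on $A_2(\bar D_2)\oplus(1)\oplus(1)$.

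The paper's proof supplies precisely the missing ingredient: it writes the two columns as $\boldsymbol{b}'$ and $\boldsymbol{b}''$ with the relation $\boldsymbol{b}=\boldsymbol{b}'+\boldsymbol{b}''$, which holds because under the \emph{double} counting rule the entry of a crossing in a region's column counts the corners of that crossing lying in the region, and the corners in $r_2$ are partitioned between $r_2'$ and $r_2''$. The first elementary operation is then to add the $r_2''$ column to the $r_2'$ column to reconstitute $\boldsymbol{b}$, after which a reduction much like yours clears the two new rows. This additivity is the real content of the proposition and is exactly what fails for the single counting rule (a crossing touched twice by $r_2$ contributes $1$ to $\boldsymbol{b}$ but $1+1$ to $\boldsymbol{b}'+\boldsymbol{b}''$), which is why the paper's subsequent remark says the analogous statement for $A_1$ breaks down at RII. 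Your argument, by assuming the second piece is invisible to old crossings, bypasses the point where the double counting hypothesis is actually used, which is a sign the locality claim cannot be right.
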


\begin{proof}
If we represent $A_2({\bar D}_2)$ by 
$$ A_2({\bar D}_2)=\begin{pmatrix} \boldsymbol{a}&  \boldsymbol{b}&  \boldsymbol{c} & P \end{pmatrix}, $$
where the columns of $\boldsymbol{a},\boldsymbol{b},\boldsymbol{c}$ in $A_2({\bar D}_2)$ 
correspond to the regions $r_1, r_2, r_3$ respectively, 
then $A_2({\bar D}_2')$ is 
$$ A_2({\bar D}_2')=\left(\begin{array}{ccc|c|cc}\boldsymbol{a}&  \boldsymbol{b}'&  \boldsymbol{c}& P & \boldsymbol{b}'' &\boldsymbol{o} \\ \hline 1&1&1&0&0&1 \\ 1&0&1&0&1&1 \end{array}\right). $$
While we transpose ${\bar D}_2$ into ${\bar D}_2'$, 
the region $r_2$ is divided into the two regions $r_2'$ and $r_2''$.  
In the double counting rule, the contribution of $r_2$ is the sum of those of $r_2'$ and $r''_2$, 
then $\boldsymbol{b}=\boldsymbol{b}'+\boldsymbol{b}''$.
Now we operate $A_2({\bar D}_2')$  as follows:
\begin{align*}
& \left( \begin{array}{ccc|c|cc}\boldsymbol{a}&  \boldsymbol{b}'&  \boldsymbol{c}& P & \boldsymbol{b}'' &\boldsymbol{o} \\ \hline 1&1&1&0&0&1 \\ 1&0&1&0&1&1 \end{array}\right) 
 &\longrightarrow & \left( \begin{array}{ccc|c|cc}\boldsymbol{a}&  \boldsymbol{b}&  \boldsymbol{c}& P & \boldsymbol{b}'' &\boldsymbol{o} \\ \hline 1&1&1&0&0&1 \\ 1&1&1&0&1&1 \end{array}\right)  \\
\longrightarrow & \left( \begin{array}{ccc|c|cc}\boldsymbol{a}&  \boldsymbol{b}&  \boldsymbol{c}& P & \boldsymbol{b}'' &\boldsymbol{o} \\ \hline 0&0&0&0&0&1 \\ 0&0&0&0&1&1 \end{array}\right) 
&\longrightarrow & \left( \begin{array}{ccc|c|cc}\boldsymbol{a}&  \boldsymbol{b}&  \boldsymbol{c}& P & \boldsymbol{b}'' &\boldsymbol{o} \\ \hline 0&0&0&0&0&1 \\ 0&0&0&0&1&0 \end{array}\right) \\
\longrightarrow & \left( \begin{array}{ccc|c|cc}\boldsymbol{a}&  \boldsymbol{b}&  \boldsymbol{c}& P & \boldsymbol{o} &\boldsymbol{o} \\ \hline 0&0&0&0&0&1 \\ 0&0&0&0&1&0 \end{array}\right) 
&\longrightarrow & \left( \begin{array}{ccc|c|cc}\boldsymbol{a}&  \boldsymbol{b}&  \boldsymbol{c}& P & \boldsymbol{o} &\boldsymbol{o} \\ \hline 0&0&0&0&1&0 \\ 0&0&0&0&0&1 \end{array}\right). \end{align*}
Thus we  get  $A_2({\bar D}_2)\oplus (1)\oplus (1)$ from $A_2({\bar D}_2')$ 
by integral elementary operations. 
\end{proof}

\begin{prop}[Reidemeister move III]\label{prop:5-5}
Let ${\bar D}_3,{\bar D}_3'$ are knot projections as in Figure \ref{fig:5-5}.  
Then $A_2({\bar D}_3)$ and $A_2({\bar D}_3')$ are $\mathbb{Z}$-equivalent to each other. 

\begin{figure}
\includegraphics[]{fig5-5.eps}
\caption{}\label{fig:5-5}
\end{figure}

\end{prop}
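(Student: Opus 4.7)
The approach is to show by an explicit sequence of integer elementary operations that $A_2(\bar D_3)$ is transformed into $A_2(\bar D_3')$. First I would set up the local picture. The RIII disk $D$ contains 3 crossings and 7 local regions: the central triangle $T$ (respectively $T'$ in $\bar D_3'$) together with 6 outer-local regions, each containing exactly one of the 6 arcs of $\partial D$. Identify the 3 local crossings of $\bar D_3$ with those of $\bar D_3'$ via the pair of strands they lie on, and the 6 outer-local regions via their boundary arcs. Outside $D$ the two projections agree, so the columns for regions lying entirely outside $D$ and the rows for crossings lying outside $D$ are identical in both matrices; moreover the central-triangle column has entry $1$ in each local row and entry $0$ in every outer row, in both $A_2(\bar D_3)$ and $A_2(\bar D_3')$. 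Consequently the two matrices can differ only in the $3 \times 6$ block $M$ (respectively $M'$) of local rows versus outer-local columns.

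The key combinatorial observation, verified by direct inspection of the local picture, is the identity
\[
M + M' = \mathbf{1}_{3 \times 6},
\]
where $\mathbf{1}_{3 \times 6}$ denotes the all-ones matrix. Equivalently, for every local crossing $v_i$ and every outer-local region $R_j$, $R_j$ is adjacent to $v_i$ in exactly one of the two projections. Geometrically, the central triangle flips to the opposite side during RIII, so at every local crossing the ``inside-triangle'' corner switches to the opposite corner; this forces the three outer-local regions meeting $v_i$ in $\bar D_3$ to be precisely the complement, within the six, of those meeting $v_i'$ in $\bar D_3'$.

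Given the identity, I would transform $A_2(\bar D_3)$ into $A_2(\bar D_3')$ via four batches of elementary operations. (i) For each of the six outer-local columns, subtract the central-triangle column from it; since the central-triangle column has zeros on all outer rows, only the local block is affected, and $M$ becomes $M - \mathbf{1}_{3 \times 6}$. (ii) Multiply each of the six outer-local columns by $-1$; the local block then becomes $\mathbf{1}_{3 \times 6} - M = M'$, and the outer-row / outer-local-column block $N$ becomes $-N$. (iii) Multiply each outer row by $-1$; this restores $N$ (and does not disturb the central-triangle column, whose outer entries are zero) but flips the sign of the outer-row / outside-$D$-column block $Q$ to $-Q$. (iv) Multiply each outside-$D$ column by $-1$, restoring $Q$ with no further effect. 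The resulting matrix is $A_2(\bar D_3')$, establishing the $\mathbb{Z}$-equivalence. The main obstacle is really the combinatorial identity in the second paragraph; once it is in hand, the four-step cancellation above is essentially forced by the fact that the central-triangle column is supported only on the three local rows and can therefore serve as a purely local lever on $M$ without propagating changes elsewhere.
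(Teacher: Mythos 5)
Your proof is correct and follows essentially the same route as the paper: both hinge on the complementarity identity $M+M'=\mathbf{1}_{3\times 6}$ for the local $3\times 6$ incidence block and on subtracting the central-triangle column (supported only on the three local rows) from the six surrounding columns. The only difference is cosmetic — where you restore signs by negating the six outer-local columns and then all outer rows and outside columns, the paper more economically negates just the three local rows and then the central column.
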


\begin{proof}
If we represent $A_2({\bar D}_3)$ by 
$$ A_2({\bar D}_3)=\left(\begin{array}{ccccccc|c}\boldsymbol{a}&  \boldsymbol{b}&  \boldsymbol{c}&  \boldsymbol{d}&  \boldsymbol{e}&  \boldsymbol{f}&  \boldsymbol{o}& P \\ \hline 1&1&0&1&0&0&1&0 \\ 0&1&1&0&1&0&1&0 \\ 0&0&0&1&1&1&1&0 \end{array}\right), $$
where the columns of 
$\boldsymbol{a},\boldsymbol{b},\boldsymbol{c},\boldsymbol{d},\boldsymbol{e},\boldsymbol{f}$ 
in ${\bar D}_3$ correspond to the regions  $r_1, r_2, \ldots, r_6$ respectively, 
then $A_2({\bar D}_3')$ is 
$$ A_2({\bar D}_3')=\left(\begin{array}{ccccccc|c}\boldsymbol{a}&  \boldsymbol{b}&  \boldsymbol{c}&  \boldsymbol{d}&  \boldsymbol{e}&  \boldsymbol{f}&  \boldsymbol{o}& P \\ \hline 0&0&1&0&1&1&1&0 \\ 1&0&0&1&0&1&1&0 \\ 1&1&1&0&0&0&1&0 \end{array}\right). $$
The column with $\boldsymbol{o}$ is the column of $r_0$ in ${\bar D}_3$ and ${\bar D}_3'$. 
Similarly, we have 
\begin{align*}
& \left(\begin{array}{ccccccc|c}\boldsymbol{a}&  \boldsymbol{b}&  \boldsymbol{c}&  \boldsymbol{d}&  \boldsymbol{e}&  \boldsymbol{f}&  \boldsymbol{o}& P \\ \hline 1&1&0&1&0&0&1&0 \\ 0&1&1&0&1&0&1&0 \\ 0&0&0&1&1&1&1&0 \end{array}\right) \\
\longrightarrow & \left(\begin{array}{ccccccc|c}\boldsymbol{a}&  \boldsymbol{b}&  \boldsymbol{c}&  \boldsymbol{d}&  \boldsymbol{e}&  \boldsymbol{f}&  \boldsymbol{o}& P \\ \hline 0&0&-1&0&-1&-1&1&0 \\ -1&0&0&-1&0&-1&1&0 \\ -1&-1&-1&0&0&0&1&0 \end{array}\right)
\end{align*}
\begin{align*}
\longrightarrow & \left(\begin{array}{ccccccc|c}\boldsymbol{a}&  \boldsymbol{b}&  \boldsymbol{c}&  \boldsymbol{d}&  \boldsymbol{e}&  \boldsymbol{f}&  \boldsymbol{o}& P \\ \hline 0&0&1&0&1&1&-1&0 \\ 1&0&0&1&0&1&-1&0 \\ 1&1&1&0&0&0&-1&0 \end{array}\right)\\
\longrightarrow & \left(\begin{array}{ccccccc|c}\boldsymbol{a}&  \boldsymbol{b}&  \boldsymbol{c}&  \boldsymbol{d}&  \boldsymbol{e}&  \boldsymbol{f}&  \boldsymbol{o}& P \\ \hline 0&0&1&0&1&1&1&0 \\ 1&0&0&1&0&1&1&0 \\ 1&1&1&0&0&0&1&0 \end{array}\right). 
\end{align*}
Thus we  get  $A_2({\bar D}_3')$ from $A_2({\bar D}_3)$ by integral elementary operations. 
\end{proof}

\begin{remark}
We easily show that  $A_1({\bar D}_1)\oplus (1)$ (resp. $A_1({\bar D}_3)$) 
and $A_1({\bar D}_1')$ (resp. $A_1({\bar D}_3')$) are $\mathbb{Z}$-equivalent in the same way. 
However, a similar argumet about $A_1({\bar D}_2)\oplus (1)\oplus (1)$ and $A_1({\bar D}_2')$ 
in the proof of Proposition \ref{prop:5-4} does not hold.  
\end{remark}
    
\begin{proof}[Proof of Theorem \ref{thm:5-2}]
We start with the knot projection ${\bar D}_0$ in Figure \ref{fig:5-1} 
and use Reidemeister moves for knot projections. 


The knot projection ${\bar D}_0$ has the smallest positive number of crossings. 
$A_2({\bar D}_0)=\begin{pmatrix}2&1&1\end{pmatrix}$ and clearly this matrix is $\mathbb{Z}$-equivalent to $E_{00}$ (of size $1\times 3$).  
As mentioned in Section $3$, any knot projection ${\bar D}$ can be obtained 
by Reidemeister moves as in Figure \ref{fig:5-2} from ${\bar D}_0$.  
\begin{figure}
\includegraphics[]{fig5-2.eps}
\caption{}\label{fig:5-2}
\end{figure}
We have already showed 
in Proposition \ref{prop:5-3}, \ref{prop:5-4} and \ref{prop:5-5} 
that the property (being $\mathbb{Z}$-equivalent to $E_{00}$) of region choice matrices 
is preserved by these moves. 
Therefore this completes the proof. 
\end{proof}

\section{Appendix II}
Here is an example of the procedure of Propositon \ref{prop:3-3}.
\newpage

\begin{figure}
\includegraphics[]{fig3-8.eps}
\caption{}\label{fig:LLexample}
\end{figure} 

\newpage
@
\newpage
\section{Appendix III}

In this appendix 
we present a small table about region choice matrices of some knot projections.  
Each row consists of a picture of knot projection ${\bar D}$, 
the augmented matrix $\begin{pmatrix}A_1({\bar D}) & \boldsymbol{b}\end{pmatrix}$, 
and the echelon form by Gaussian elimination.

\par\bigskip
{\large $\bar{3_1}$}\mbox{}\\
\raisebox{-1.5cm}{\includegraphics[]{fig6-31.eps}} \vspace{0.5cm}\qquad 
$\left(
\begin{array}{lllll|l}
 1 & 1 & 1 & 1 & 0 &b_1\\
 1 & 1 & 0 & 1 & 1 &b_2\\
 1 & 0 & 1 & 1 & 1 &b_3
\end{array}
\right) $
\\
$\left(
\begin{array}{ccccc|l}
 1 & 0 & 0 & 1 & 2 & -b_1+b_2+b_3 \\
 0 & 1 & 0 & 0 & -1 & -b_3+b_1 \\
 0 & 0 & 1 & 0 & -1 & -b_2+b_1
\end{array}
\right)$\\

\par\bigskip
{\large $\bar{4_1}$}\mbox{}\\
\raisebox{-1.5cm}{\includegraphics[]{fig6-41.eps}} \vspace{0.5cm}\qquad 
$\left(
\begin{array}{llllll|l}
 1 & 1 & 1 & 1 & 0 & 0 & b_1 \\
 0 & 1 & 1 & 1 & 1 & 0 & b_2 \\
 1 & 1 & 0 & 0 & 1 & 1 & b_3 \\
 1 & 0 & 0 & 1 & 1 & 1 & b_4
\end{array}
\right)$
\\
$\left(
\begin{array}{cccccc|l}
 1 & 0 & 0 & 0 & -1 & 0 & b_1-b_2 \\
 0 & 1 & 0 & 0 & 2 & 1 & -b_1+b_2+b_3 \\
 0 & 0 & 1 & 0 & -3 & -2 & 2 b_1-b_2-b_3-b_4 \\
 0 & 0 & 0 & 1 & 2 & 1 & -b_1+b_2+b_4
\end{array}
\right)$

\par\bigskip
{\large $\bar{5_1}$}\mbox{}\\
\raisebox{-1.5cm}{\includegraphics[]{fig6-51.eps}} \vspace{0.5cm}\qquad 
$\left(
\begin{array}{lllllll|l}
 1 & 1 & 1 & 0 & 0 & 0 & 1 & b_1 \\
 1 & 1 & 0 & 1 & 0 & 0 & 1 & b_2 \\
 1 & 0 & 1 & 0 & 1 & 0 & 1 & b_3 \\
 1 & 0 & 0 & 1 & 0 & 1 & 1 & b_4 \\
 1 & 0 & 0 & 0 & 1 & 1 & 1 & b_5
\end{array}
\right)$
\\
$\left(
\begin{array}{ccccccc|l}
 1 & 0 & 0 & 0 & 0 & 2 & 1 & b_1-b_2-b_3+b_4+b_5 \\
 0 & 1 & 0 & 0 & 0 & -1 & 0 & b_2-b_4 \\
 0 & 0 & 1 & 0 & 0 & -1 & 0 & b_3-b_5 \\
 0 & 0 & 0 & 1 & 0 & -1 & 0 & -b_1+b_2+b_3-b_5 \\
 0 & 0 & 0 & 0 & 1 & -1 & 0 & -b_1+b_2+b_3-b_4
\end{array}
\right)$

\par\bigskip
{\large $\bar{5_2}$}\mbox{}\\
\raisebox{-1.5cm}{\includegraphics[]{fig6-52.eps}} \vspace{0.5cm}\qquad 
$\left(
\begin{array}{lllllll|l}
 1 & 1 & 1 & 0 & 0 & 1 & 0 & b_1 \\
 1 & 1 & 0 & 1 & 1 & 0 & 0 & b_2 \\
 0 & 1 & 0 & 1 & 1 & 1 & 0 & b_3 \\
 1 & 0 & 1 & 0 & 0 & 1 & 1 & b_4 \\
 1 & 0 & 0 & 0 & 1 & 1 & 1 & b_5
\end{array}
\right)$
\\
$\left(
\begin{array}{ccccccc|l}
 1 & 0 & 0 & 0 & 0 & -1 & 0 & b_2-b_3 \\
 0 & 1 & 0 & 0 & 0 & 0 & -1 & b_1-b_4 \\
 0 & 0 & 1 & 0 & 0 & 2 & 1 & -b_2+b_3+b_4 \\
 0 & 0 & 0 & 1 & 0 & -1 & 0 & -b_1+b_2+b_4-b_5 \\
 0 & 0 & 0 & 0 & 1 & 2 & 1 & -b_2+b_3+b_5
\end{array}
\right)$

\par\bigskip
{\large $\bar{6_1}$}\mbox{}\\
\raisebox{-1.5cm}{\includegraphics[]{fig6-61.eps}} \vspace{0.5cm}\qquad 
$\left(
\begin{array}{llllllll|l}
 1 & 1 & 1 & 0 & 0 & 0 & 1 & 0 & b_1 \\
 1 & 0 & 1 & 0 & 0 & 1 & 1 & 0 & b_2 \\
 1 & 1 & 0 & 1 & 1 & 0 & 0 & 0 & b_3 \\
 0 & 1 & 0 & 1 & 1 & 0 & 1 & 0 & b_4 \\
 1 & 0 & 0 & 0 & 0 & 1 & 1 & 1 & b_5 \\
 1 & 0 & 0 & 0 & 1 & 0 & 1 & 1 & b_6
\end{array}
\right)$
\\
$\left(
\begin{array}{cccccccc|l}
 1 & 0 & 0 & 0 & 0 & 0 & -1 & 0 & b_3-b_4 \\
 0 & 1 & 0 & 0 & 0 & 0 & 2 & 1 & b_1-b_2-b_3+b_4+b_5 \\
 0 & 0 & 1 & 0 & 0 & 0 & 0 & -1 & b_2-b_5 \\
 0 & 0 & 0 & 1 & 0 & 0 & -3 & -2 & -b_1+b_2+2 b_3-b_4-b_5-b_6 \\
 0 & 0 & 0 & 0 & 1 & 0 & 2 & 1 & -b_3+b_4+b_6 \\
 0 & 0 & 0 & 0 & 0 & 1 & 2 & 1 & -b_3+b_4+b_5
\end{array}
\right)$

\par\bigskip
{\large $\bar{6_2}$}\mbox{}\\
\raisebox{-1.5cm}{\includegraphics[]{fig6-62.eps}} \vspace{0.5cm}\qquad 
$\left(
\begin{array}{llllllll|l}
 1 & 1 & 1 & 0 & 0 & 0 & 1 & 0 & b_1 \\
 1 & 1 & 0 & 1 & 1 & 0 & 0 & 0 & b_2 \\
 0 & 1 & 0 & 0 & 1 & 1 & 1 & 0 & b_3 \\
 1 & 0 & 1 & 0 & 0 & 0 & 1 & 1 & b_4 \\
 1 & 0 & 0 & 1 & 1 & 1 & 0 & 0 & b_5 \\
 1 & 0 & 0 & 0 & 0 & 1 & 1 & 1 & b_6
\end{array}
\right)$
\\
$\left(
\begin{array}{cccccccc|l}
 1 & 0 & 0 & 0 & 0 & 0 & 1 & 2 & -b_1+b_2+b_4-b_5+b_6 \\
 0 & 1 & 0 & 0 & 0 & 0 & 0 & -1 & b_1-b_4 \\
 0 & 0 & 1 & 0 & 0 & 0 & 0 & -1 & b_1-b_2+b_5-b_6 \\
 0 & 0 & 0 & 1 & 0 & 0 & -2 & -3 & 2 b_1-b_2-b_3-2 b_4+2 b_5-b_6 \\
 0 & 0 & 0 & 0 & 1 & 0 & 1 & 2 & -2 b_1+b_2+b_3+2 b_4-b_5 \\
 0 & 0 & 0 & 0 & 0 & 1 & 0 & -1 & b_1-b_2-b_4+b_5
\end{array}
\right)$

\par\bigskip
{\large $\bar{6_3}$}\mbox{}\\
\raisebox{-1.5cm}{\includegraphics[]{fig6-63.eps}} \vspace{0.5cm}\qquad 
$\left(
\begin{array}{llllllll|l}
 1 & 1 & 1 & 1 & 0 & 0 & 0 & 0 & b_1 \\
 1 & 1 & 0 & 0 & 1 & 1 & 0 & 0 & b_2 \\
 0 & 1 & 0 & 1 & 1 & 0 & 0 & 1 & b_3 \\
 0 & 0 & 1 & 1 & 0 & 0 & 1 & 1 & b_4 \\
 1 & 0 & 0 & 0 & 1 & 1 & 0 & 1 & b_5 \\
 1 & 0 & 1 & 0 & 0 & 0 & 1 & 1 & b_6
\end{array}
\right)$
\\
$\left(
\begin{array}{cccccccc|l}
 1 & 0 & 0 & 0 & 0 & 0 & -1 & 0 & b_1-b_2-b_4+b_5 \\
 0 & 1 & 0 & 0 & 0 & 0 & 0 & -1 & b_2-b_5 \\
 0 & 0 & 1 & 0 & 0 & 0 & 2 & 1 & -b_1+b_2+b_4-b_5+b_6 \\
 0 & 0 & 0 & 1 & 0 & 0 & -1 & 0 & b_1-b_2+b_5-b_6 \\
 0 & 0 & 0 & 0 & 1 & 0 & 1 & 2 & -b_1+b_3+b_6 \\
 0 & 0 & 0 & 0 & 0 & 1 & 0 & -1 & b_2-b_3+b_4-b_6
\end{array}
\right)$


\section{acknowledgements}

The authors would like to thank Professor Akio Kawauchi 
and Professor Ayaka Shimizu 
for introducing them such an interesting problem.

\bibliographystyle{amsplain}

\end{document}